 \newtheorem{lm}{Lemma}
  \newtheorem{te}{Theorem}
	  \newtheorem{co*}{Corollary}
\begin{document}
\setcounter{page}{1}
\title[The Poincar\'e series]{The Poincar\'e series for the algebras of joint invariants and covariants of $n$ linear  forms.}
\author{Nadia Ilash}
\address{Department of Programming,  Computer and Telecommunication systems,
                Khmelnytskyi National  University\\
                Khmelnytskyi, Instytutska,11\\
                29016, Ukraine}
\email{ilashnadya@yandex.ua}

\begin{abstract}   Explicit formulas for computation of the Poincar\'e series for the algebras of joint invariants and covariants of $n$ linear  forms are found. 
Also, for these algebras we calculate the degrees  and asymptotic behavious of the degrees.
\end{abstract}
\maketitle
\noindent
{\bf Keywords:}
classical invariant theory;  invariants; Poincar\`e series; combinatorics \\
{\bf 2010 MSC}:{ \it  13N15; 13A50;  05A19;05E40  } \\

\textbf{1.} Let $V_1$ be the complex vector space  of  linear binary forms  endowed with the natural action of the special linear group  $SL_2$.  Consider the corresponding action of the group  $SL_2$ on  the  algebras of polynomial functions  $\mathbb{C}[n V_{1}]$ and $\mathbb{C}[n V_{1} \oplus \mathbb{C}^2 ],$ where $n V_{1}:=\underbrace{V_1 \oplus V_1 \oplus \cdots \oplus V_1}_{\text{$n$  times}}.$
Denote   by ${\mathcal{I}_{n}=\mathbb{C}[n V_1  ]^{\,SL_2}}$ and by ${\mathcal{C}_{n}=\mathbb{C}[n V_1 \oplus \mathbb{C}^2 ]^{\,SL_2}}$  the  correspon\-ding algebras of   invariant polynomial functions.
 In the language  of classical invariant theory  the algebras  $\mathcal{I}_{n}$ and   $\mathcal{C}_{n}$ are called the algebra  of join invariants and the algebra of join covariants for  the $n$  linear binary forms respectively. 
  A  generating  set of  the algebra $\mathcal{I}_{n}$ was conjectured by Nowicki \cite{Now}. It  had been proved later by different authors,  for instance see \cite{DrM}, \cite{B_nC}.
  The algebras  $\mathcal{C}_{n},$ $\mathcal{I}_{n}$ are   affine graded algebras under the usual degree: 
\begin{gather*}
\mathcal{C}_{n}=(\mathcal{C}_{n})_{0}+(\mathcal{C}_{n})_{1}+\cdots+(\mathcal{C}_{n})_{j}+ \cdots,
\mathcal{I}_{n}=(\mathcal{I}_{n})_{0}+(\mathcal{I}_{n})_{1}+\cdots+(\mathcal{I}_{n})_{j}+ \cdots,
\end{gather*}
where  each of subspaces   $(\mathcal{C}_{n})_{j}$ and $(\mathcal{I}_{n})_{j}$   is   finite-dimensional. The formal power series 
$$
\mathcal{P}(\mathcal{C}_{n},z)=\sum_{j=0}^{\infty }\dim(\mathcal{C}_{n})_{j}\, z^j,
\mathcal{P}(\mathcal{I}_{n},z)=\sum_{j=0}^{\infty }\dim(\mathcal{I}_{n})_{j}\, z^j,
$$
are called the  Poincar\'e series of the algebras  $\mathcal{C}_{n}$
and  $\mathcal{I}_{n}$.
In the paper \cite{B_arh2}  the following expressions for the Poincar\'e series of those algebras was derived: 

	$$\mathcal{P}(\mathcal{I}_{n},z)=\sum_{k=1}^n {\frac{(-1)^{n-k}(n)_{n-k}}{(k-1)!(n-k)!}\frac{d^{k-1}}{dz^{k-1}}\left(\left(\frac{z}{1-z^2}\right)^{2n-k-1}\right)},$$
	$$\mathcal{P}(\mathcal{C}_{n},z)=\sum_{k=1}^n {\frac{(-1)^{n-k}(n)_{n-k}}{(k-1)!(n-k)!}\frac{d^{k-1}}{dz^{k-1}}\left(\frac{(1+z) z^{2n-k-1}}{(1-z^2)^{2n-k}}\right)},$$ 
	where $(n)_m:=n(n+1)\cdots(n+m-1), (n)_0:=1$ denotes the shifted factorial.
	
In the present paper 
 those formulas 
 are reduced to  the following forms:
$$\mathcal{P}(\mathcal{I}_{n},z)=\frac{N_{n-2}(z^2)}{(1-z^2)^{2n-3}} 
 \text{   and  }
\mathcal{P}(\mathcal{C}_{n},z)=\dfrac{W_{n-1}(z^2)+n z N_{{n-1}}(z^2)} {(1-z^2)^{2n-1}},
$$
where 
$$
N_n(z)=\sum_{k=1}^n \frac{1}{k} { n-1 \choose k-1}{ n \choose k-1} z^{k-1} \text{ and } W_n(z)=\sum_{k=0}^{n} {{{n} \choose {k}}^2 z^{k}},
$$
denotes  the \textit{Narayana polynomials}  and 
 the \textit{Narayana polynomials of type B} respectively. 

Also, the degrees  of algebras $\mathcal{I}_{n},$ $\mathcal{C}_{n}$  and asymptotic behaviors of the degrees are calculated using the explicit expressions for the Poincar\'e series.



\textbf{2.} Let us prove several auxiliary combinatorial identities.

\begin{lm}
Let 
$m, k, s $ be non-negative integers. The generalized Le Jen Shoo identity   holds:
$$\sum_{i=0}^{min \{k,m\}} {m \choose i}{m+2s \choose i+s} {k-i+2m+2s \choose 2m+2s}={m+k+s \choose m+s} {m+k+2s \choose m+s}.$$
\end{lm}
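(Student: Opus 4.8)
The identity is a two-parameter generalization of the classical Li Shanlan (Le Jen Shoo) identity $\sum_i \binom{m}{i}^2\binom{k-i+2m}{2m}=\binom{m+k}{m}^2$, which is recovered by setting $s=0$; this strongly suggests treating the left-hand side as a terminating, balanced hypergeometric series and invoking the Pfaff--Saalsch\"utz theorem. The plan is as follows. First I would denote the $i$-th summand by $T_i$ and compute the ratio of consecutive terms. A direct calculation gives
$$\frac{T_{i+1}}{T_i}=\frac{(i-m)(i-m-s)(i-k)}{(i+1)(i+s+1)(i-k-2m-2s)},$$
so that the sum equals $T_0\cdot{}_3F_2$ at argument $1$, with $T_0=\binom{m+2s}{s}\binom{k+2m+2s}{2m+2s}$, upper parameters $-m,\ -m-s,\ -k$, and lower parameters $s+1,\ -k-2m-2s$.

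The next step is to verify that this ${}_3F_2$ is Saalsch\"utzian, i.e.\ that the sum of the lower parameters exceeds that of the upper parameters by exactly $1$: here $(s+1)+(-k-2m-2s)=1-k-2m-s$ while $(-m)+(-m-s)+(-k)=-k-2m-s$, and the difference is indeed $1$. The terminating parameter may be taken to be either $-m$ or $-k$ (both are negative integers), matching the truncation of the sum at $\min\{k,m\}$; I would take $-m$. Applying the Pfaff--Saalsch\"utz evaluation
$$\sum_{i\ge 0}\frac{(-m)_i\,(a)_i\,(b)_i}{(c)_i\,(1+a+b-c-m)_i\, i!}=\frac{(c-a)_m\,(c-b)_m}{(c)_m\,(c-a-b)_m}$$
with $a=-m-s,\ b=-k,\ c=s+1$ then yields
$${}_3F_2=\frac{(m+2s+1)_m\,(k+s+1)_m}{(s+1)_m\,(m+k+2s+1)_m}.$$

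It remains to multiply by $T_0$ and reduce the resulting product of binomials and shifted factorials to the stated right-hand side. Rewriting each shifted factorial as a quotient of factorials, e.g.\ $(m+2s+1)_m=(2m+2s)!/(m+2s)!$ and $(k+s+1)_m=(k+s+m)!/(k+s)!$, and cancelling, the whole expression collapses to $\dfrac{(m+k+s)!\,(m+k+2s)!}{((m+s)!)^2\,k!\,(k+s)!}$, which is precisely $\binom{m+k+s}{m+s}\binom{m+k+2s}{m+s}$. I expect this final factorial bookkeeping --- tracking the many cancellations and reading the Pochhammer symbols correctly --- to be the only delicate part, since the conceptual content is carried entirely by Pfaff--Saalsch\"utz. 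A fully elementary alternative would replace the hypergeometric step by coefficient extraction: writing $\binom{k-i+2m+2s}{2m+2s}=[t^{k-i}](1-t)^{-(2m+2s+1)}$ presents the left-hand side as $[t^k]\,(1-t)^{-(2m+2s+1)}\sum_i\binom{m}{i}\binom{m+2s}{i+s}t^i$, after which one computes the inner polynomial and matches generating functions in $k$; this route is serviceable but messier than the direct appeal to Pfaff--Saalsch\"utz.
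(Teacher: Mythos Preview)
Your proof is correct. The ratio computation, the balance check, and the final factorial reduction all go through exactly as you describe, and the Pfaff--Saalsch\"utz evaluation produces the claimed right-hand side.

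Your route, however, differs from the paper's. The paper does not set up any hypergeometric series; it first observes that the upper limit $\min\{k,m\}$ can be replaced by $\infty$ (the extra terms vanish), and then simply specializes a five-parameter identity of Sz\'ekely,
\[
\binom{a+c+d+e}{a+c}\binom{b+c+d+e}{c+e}=\sum_i \binom{a+d}{i+d}\binom{b+c}{i+c}\binom{a+b+c+d+e-i}{a+b+c+d},
\]
taking $a=m,\ b=m+s,\ c=s,\ d=0,\ e=k$. So the paper's argument is essentially a citation plus a substitution. Your argument is more self-contained and explains \emph{why} the identity holds---it is a terminating balanced ${}_3F_2(1)$---at the cost of doing the Pochhammer bookkeeping by hand. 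Since Sz\'ekely's identity is itself a Pfaff--Saalsch\"utz consequence, the two approaches are close cousins; yours trades a reference for a classical summation theorem and makes the underlying mechanism visible.
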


\begin{proof}
Taking into account  $$ \displaystyle {{m} \choose i}=0, \text{ for } i>m , \text{ and } \displaystyle {k-i+2m+2s \choose k-i} =0,  \text{ for } i>k,$$ 
we  have 
\begin{gather*} 
\sum_{i=0}^{\infty} {m \choose i}{m+2s \choose i+s} {k-i+2m+2s \choose 2m+2s}=\\
= \sum_{i=0}^k {m \choose i}{m+2s \choose i+s} {k-i+2m+2s \choose 2m+2s} +\sum_{i=k+1}^{\infty} {m \choose i}{m+2s \choose i+s} {k-i+2m+2s \choose 2m+2s} =\\
=\sum_{i=0}^k {m \choose i}{m+2s \choose i+s} {k-i+2m+2s \choose 2m+2s} + \sum_{i=k+1}^{\infty} {m \choose i}{m+2s \choose i+s} \cdot0=\\
= \sum_{i=0}^k {m \choose i}{m+2s \choose i+s} {k-i+2m+2s \choose 2m+2s}=\\
\sum_{i=0}^m {m \choose i}{m+2s \choose i+s} {k-i+2m+2s \choose 2m+2s} +\sum_{i=m+1}^{\infty} {m \choose i}{m+2s \choose i+s} {k-i+2m+2s \choose 2m+2s}=\\
=\sum_{i=0}^m {m \choose i}{m+2s \choose i+s} {k-i+2m+2s \choose 2m+2s} +\sum_{i=m+1}^{\infty} 0 \cdots {m+2s \choose i+s} {k-i+2m+2s \choose 2m+2s}=\\
=\sum_{i=0}^m {m \choose i}{m+2s \choose i+s} {k-i+2m+2s \choose 2m+2s}=\sum_{i=0}^{min \{k,m\}} {m \choose i}{m+2s \choose i+s} {k-i+2m+2s \choose 2m+2s}.
\end{gather*}
Now  the statement  follows immediately from following identity, see  \cite{Sz}: 
 $${a+c+d+e \choose a+c} {b+c+d+e \choose c+e}=\sum_{i} {a+d \choose i+d}{b+c \choose i+c} {a+b+c+d+e-i \choose a+b+c+d},$$
if we set  $a=m, b=m+s,c=s, d=0$ and $e=k. $
\end{proof}

\begin{lm}
Let $k, n>1$  be non-negative integers; then
$$\sum_{i=0}^{\min\{k, n-1\}}(-1)^i { n{+}i{-}1 \choose i}{{n{+}k{-}2} \choose k{-}i} {n{+}2k{-}i{-}1 \choose 2k}= \frac {\displaystyle{n{+}k{-}1\choose k}{n{-}2{+}k\choose k}}{k+1}.$$
\end{lm}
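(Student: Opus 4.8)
The plan is to treat the left-hand side as a terminating, single-variable hypergeometric sum and to evaluate it in closed form directly, rather than to invoke Lemma~1: the alternating sign $(-1)^i$ blocks a term-by-term match with the sign-free identity of Lemma~1, and the factor $\tfrac{1}{k+1}$ on the right is a Narayana-type normalization that cannot arise from a single product of binomial coefficients. First I would write the summand as $T_i=(-1)^i\binom{n+i-1}{i}\binom{n+k-2}{k-i}\binom{n+2k-i-1}{2k}$, note the vanishing conventions $\binom{n+k-2}{k-i}=0$ for $i>k$ and the factor forcing $i\le n-1$, which justify the upper limit $\min\{k,n-1\}$, and factor out the $i=0$ term $T_0=\binom{n+k-2}{k}\binom{n+2k-1}{2k}$. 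Computing the ratio $T_{i+1}/T_i=\dfrac{(i+n)(i-k)(i-n+1)}{(i+1)(i+n-1)(i-n-2k+1)}$ then identifies the sum as $T_0\cdot{}_3F_2(n,-k,1-n;\,n-1,\,1-n-2k;\,1)$.

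The key step exploits that the upper parameter $n$ and the lower parameter $n-1$ differ by $1$. Using $(n)_i/(n-1)_i=(n+i-1)/(n-1)$ and splitting $n+i-1=(n-1)+i$ breaks the ${}_3F_2$ into two terminating Gauss series ${}_2F_1(\cdots;1)$; at the level of binomials this is the same as applying Pascal's rule $\binom{n+i-1}{i}=\binom{n+i-2}{i}+\binom{n+i-2}{i-1}$ and shifting the index in the second piece. Each resulting sum is then a Chu--Vandermonde sum and collapses to a ratio of factorials. I would record the two evaluations, the second of which (after the shift $i\mapsto i+1$) carries an extra rational factor $\tfrac{k}{k+1}$ relative to the first.

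Combining the two pieces produces the telltale factor $1-\tfrac{k}{k+1}=\tfrac{1}{k+1}$, giving ${}_3F_2(\cdots;1)=\tfrac{1}{k+1}\cdot\tfrac{(2k)!\,(n+k-1)!}{k!\,(n+2k-1)!}$. Multiplying back by $T_0=\binom{n+k-2}{k}\binom{n+2k-1}{2k}$ and cancelling $(2k)!$ and $(n+2k-1)!$ collapses everything to $\tfrac{1}{k+1}\binom{n+k-1}{k}\binom{n+k-2}{k}$, the claimed right-hand side; I would confirm the arithmetic against the small cases $n=2,3$ and $k=1,2$. As a robust fallback, one could instead verify a first-order recurrence in $k$ for both sides by creative telescoping, but the two-term Vandermonde reduction is more transparent.

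The main obstacle I anticipate is precisely this reduction together with the bookkeeping of the first paragraph. The sum is \emph{not} Saalschützian (the sum of the lower parameters minus that of the upper is $-k-1$, not $1$), so no single classical ${}_3F_2(1)$ evaluation applies; it is exactly the difference-one pair $(n,n-1)$ that makes the two-term split work and manufactures the $\tfrac{1}{k+1}$. Care is needed to keep the two Chu--Vandermonde evaluations and their factorial simplifications consistent, and to orient the sign bookkeeping so that the cancellation $1-\tfrac{k}{k+1}$ emerges with the correct sign.
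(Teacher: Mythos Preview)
Your proposal is correct and is essentially the paper's own argument recast in hypergeometric language: the observation that the upper/lower parameters $n$ and $n-1$ differ by $1$, i.e.\ $(n)_i/(n-1)_i=(n{+}i{-}1)/(n{-}1)$, is exactly the paper's rewriting $\binom{n+i-1}{i}\binom{n+k-2}{k-i}=\frac{n+i-1}{n-1}\binom{n+k-2}{n-2}\binom{k}{i}$, and your additive split $n{+}i{-}1=(n{-}1)+i$ followed by two Chu--Vandermonde evaluations is precisely what the paper does (citing Riordan's identity $\sum_i(-1)^i\binom{n-i}{m-i}\binom{p}{i}=\binom{n-p}{m}$), with the same cancellation producing the factor $\tfrac{1}{k+1}$.
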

\begin{proof}
We  have 
\begin{align*} 
\sum_{i=0}^{\min\{k, n-1\}}(-1)^i { n+i-1 \choose i}{{n+k-2} \choose k-i} {n+2k-i-1 \choose 2k}=\\=\sum_{i=0}^{n-1}(-1)^i { n+i-1 \choose i}{{n+k-2} \choose k-i} {n+2k-i-1 \choose 2k}=\\=\sum_{i=0}^{k}(-1)^i { n+i-1 \choose i}{{n+k-2} \choose k-i} {n+2k-i-1 \choose 2k}.
\end{align*}
Note that   $${ {n{+}i{-}1} \choose i}{{n{+}k{-}2} \choose k{-}i}=\frac{n{+}i{-}1}{n{-}1} {{n{+}k{-}2} \choose n{+}i-2}{ n{+}i{-}2 \choose n{-}2}= \frac{n{+}i{-}1}{n{-}1} {n{+}k{-}2 \choose n{-}2}{k \choose i},$$ and $$ {\displaystyle \dfrac{n-1}{k+1} {n+k-1\choose k}={n+k-1\choose k+1}}.$$
So we   prove that
$$\sum_{i=0}^{k} (-1)^i (n-1+i) {k \choose i}{n+2k-i-1 \choose 2k} ={n+k-1\choose k+1}.$$
Let us put $S_1=\sum_{i=0}^{k} (-1)^i (n-1+i) {k \choose i}{n+2k-i-1 \choose 2k}.$ We have:
$$S_1=(n-1)\sum_{i=0}^{k} (-1)^i  {k \choose i}{n+2k-i-1 \choose 2k}+ \sum_{i=0}^{k} (-1)^i i {k \choose i}{n+2k-i-1 \choose 2k}.$$
Using the following  identity, see \cite{R}, p.8 
\begin{gather*}
\sum_i (-1)^i {n-i \choose m-i} {p \choose i}= {n-p \choose  m},
\end {gather*}
we get: 
\begin{gather*} 
S_1=(n-1){n+k-1 \choose k}- k\sum_{i=1}^{k} (-1)^{i-1}  {k-1 \choose i-1}{n+2k-(i-1)-2 \choose 2k}=\\=
(n-1){n+k-1 \choose k}- k\sum_{i=0}^{k-1} (-1)^{i}  {k-1 \choose i}{n+2k-2-i \choose n-2-i} =\\= (n-1){n+k-1 \choose k}- k{n+k-1 \choose n-2}=\\
=(n-1)\left(\frac{(n+k-1)!}{k!(n-1)!}-\frac{(n+k-1)!k}{(k+1)!(n-1)!}\right)=\frac{n-1}{k+1}\frac{(n+k-1)!}{(k+1)!(n-2)!}={n+k-1\choose k+1}.
\end{gather*} 
This concludes the proof.
\end {proof}
Substituting $m=n-3$ and $s=1$ into Lemma 1, we obtain:
$$ 
\sum_{i}{{n-3} \choose i}{{n-1} \choose i+1} {{2n+k-i-4} \choose {k-i}} ={n+k-1\choose n-2}{n-2+k\choose n-2}.
$$
Multiplying both sides by  $\dfrac{1}{n-1}, (n>2)$ and using Lemma 2, we get:  
\begin{gather} 
\sum_{i=0}^{\min\{k, n-1\}}(-1)^i { n+i-1 \choose i}{{n+k-2} \choose k-i} {n+2k-i-1 \choose 2k}=
\end{gather}
\begin{gather*}
 =\sum_{i=0}^{\min\{k, n-3\}}{{n-3} \choose i}{{n-2} \choose i} {{2n+k-i-4} \choose {k-i}}\frac{1}{i+1}. 
\end{gather*}



\textbf{3.} We use the  derived above  combinatorial identities to simplify expressions for the Poincar\'e series 
	$\mathcal{P}(\mathcal{I}_{n},z)$ and $\mathcal{P}(\mathcal{C}_{n},z)$ from \cite{B_arh2} .

	\begin{te}
The following formulas hold:
$$
\begin{array}{ll}
(i) &	\mathcal{P}(\mathcal{I}_{n},z)= \frac{\displaystyle \sum_{k=1}^{n-2} {\frac1k {{n-3} \choose {k-1}}{{n-2} \choose {k-1}}z^{2k-2}}} {\displaystyle (1-z^2)^{2n-3}},\\

(ii) & \mathcal{P}(\mathcal{C}_{n},z)=\dfrac{\displaystyle \sum_{k=0}^{n-1} {{{n-1} \choose {k}}^2 z^{2k}}+\sum_{k=0}^{n-2} {{n-2} \choose {k}} {{n} \choose {k+1}} z^{2k+1}} {(1-z^2)^{2n-1}}.
	\end{array}
	$$
\end{te}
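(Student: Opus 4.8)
The plan is to prove both identities by comparing Taylor coefficients: each of the four expressions is a formal power series in $z$, so it suffices to check that the coefficient of every power of $z$ agrees on the two sides. Throughout I expand the denominators by the negative binomial series $(1-z^2)^{-m}=\sum_{j\ge 0}\binom{m+j-1}{j}z^{2j}$ and evaluate the iterated derivatives by $\frac{d^{\,k-1}}{dz^{\,k-1}}z^{p}=\frac{p!}{(p-k+1)!}\,z^{\,p-k+1}$. In every case the coefficient coming from the formula of \cite{B_arh2} turns out to be an alternating sum of three binomial coefficients (a ``Lemma~2 type'' sum), while the coefficient coming from the claimed closed form is a non-alternating sum that collapses, via Lemma~1, to a product of two binomials; matching the two is then exactly a combinatorial identity of the kind already at hand.

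For part $(i)$ I expand $\bigl(z/(1-z^2)\bigr)^{2n-k-1}=\sum_{j\ge0}\binom{2n-k-2+j}{j}z^{2n-k-1+2j}$, differentiate $k-1$ times, and read off the coefficient of $z^{2t}$; this forces $j=t-n+k$. Writing $i=n-k$ and using $(n)_{n-k}=(n+i-1)!/(n-1)!$, the ratios of factorials collapse to $\binom{n+i-1}{i}$ and $\binom{n+2t-i-1}{2t}$, while the series coefficient supplies the middle factor $\binom{n+t-2}{t-i}$, so the coefficient of $z^{2t}$ in the series of \cite{B_arh2} is precisely the left-hand side of identity~(1) with $k$ replaced by $t$. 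Expanding the claimed closed form and extracting the same coefficient (after reindexing $i=k-1$) reproduces the right-hand side of identity~(1). Hence $(i)$ follows at once from the already-established identity~(1).

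Part $(ii)$ runs along the same lines once the factor $1+z$ is split off: since $\frac{(1+z)z^{2n-k-1}}{(1-z^2)^{2n-k}}=\frac{z^{2n-k-1}}{(1-z^2)^{2n-k}}+\frac{z^{2n-k}}{(1-z^2)^{2n-k}}$, the first summand contributes only even powers and the second only odd powers, so the even and odd parts may be handled separately. Extracting the coefficient of $z^{2t}$ from the even part and simplifying as above gives $\sum_i(-1)^i\binom{n+i-1}{i}\binom{n+t-1}{t-i}\binom{n+2t-i-1}{2t}$, whereas the coefficient of $z^{2t}$ in $W_{n-1}(z^2)/(1-z^2)^{2n-1}$ equals $\sum_k\binom{n-1}{k}^2\binom{2n-2+t-k}{t-k}$, which Lemma~1 with $s=0,\ m=n-1$ collapses to $\binom{n+t-1}{n-1}^2$. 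The even part thus reduces to the identity $\sum_i(-1)^i\binom{n+i-1}{i}\binom{n+t-1}{t-i}\binom{n+2t-i-1}{2t}=\binom{n+t-1}{n-1}^2$. For the odd part the coefficient of $z^{2t+1}$ from the derivative formula becomes $\sum_i(-1)^i\binom{n+i-1}{i}\binom{n+t-1}{t-i}\binom{n+2t-i}{2t+1}$, while the coefficient of $z^{2t+1}$ in $nzN_{n-1}(z^2)/(1-z^2)^{2n-1}$ equals $\sum_k\binom{n-2}{k}\binom{n}{k+1}\binom{2n-2+t-k}{t-k}$, which Lemma~1 with $s=1,\ m=n-2$ turns into $\binom{n+t-1}{n-1}\binom{n+t}{n-1}$. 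So the odd part reduces to the companion identity $\sum_i(-1)^i\binom{n+i-1}{i}\binom{n+t-1}{t-i}\binom{n+2t-i}{2t+1}=\binom{n+t-1}{n-1}\binom{n+t}{n-1}$. Each of these two identities I would prove by re-running the argument of Lemma~2: rewrite the product of the first two binomials through a subset-of-a-subset identity and then apply the alternating Vandermonde identity $\sum_i(-1)^i\binom{n-i}{m-i}\binom{p}{i}=\binom{n-p}{m}$ of \cite{R}.

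I expect the entire difficulty to sit in part $(ii)$. For the invariant series the whole computation funnels into the single identity~(1), and the only genuine labour is the factorial bookkeeping in the coefficient extraction. For the covariant series the extra factor $1+z$ forces the even and odd parts apart and produces two alternating sums that are close to, but not the same as, identity~(1); establishing them---particularly the squared-binomial (type~B) identity for the even part---means re-proving a Lemma~2 analogue with the shifted middle binomial $\binom{n+t-1}{t-i}$ rather than quoting~(1) verbatim, and keeping the two parity contributions correctly aligned is where the care is needed.
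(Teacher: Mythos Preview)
Your plan is correct and mirrors the paper's proof almost step for step: the paper likewise extracts Taylor coefficients, arrives at the same alternating triple-binomial sums, splits $(ii)$ into even and odd parts via the $1+z$ factor, collapses the closed-form coefficients through Lemma~1 with $(m,s)=(n-1,0)$ and $(m,s)=(n-2,1)$, and evaluates the alternating sums by the subset-of-a-subset rewrite followed by the alternating Vandermonde identity from \cite{R}. One remark: the two ``Lemma~2 analogues'' you flag as the hard part are in fact \emph{easier} than Lemma~2, since the shifted middle binomial $\binom{n+t-1}{t-i}$ makes the subset-of-a-subset identity give $\binom{n+i-1}{i}\binom{n+t-1}{t-i}=\binom{n+t-1}{t}\binom{t}{i}$ exactly, with no leftover $(n+i-1)$ factor, so a single application of the alternating Vandermonde identity finishes each case.
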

\begin {proof}
$(i)$   Let us expand function  $$\sum_{k=1}^n {\frac{(-1)^{n-k}(n)_{n-k}}{(k-1)!(n-k)!}\frac{d^{k-1}}{dz^{k-1}}\left(\left(\frac{z}{1-z^2}\right)^{2n-k-1}\right)},$$ into the  Taylor series about $z$. We have 
\begin{gather*}
\mathcal{P}(\mathcal{I}_{n},z)=\sum_{k=1}^n {\frac{(-1)^{n-k}(n)_{n-k}}{(k-1)!(n-k)!}\frac{d^{k-1}}{dz^{k-1}}\left(z^{2n-k-1}\sum_{i=0}^\infty {{{2n-k+i-2} \choose i} z^{2i}}\right)}=\\=\sum_{k=1}^n {\frac{(-1)^{n-k}(n)_{n-k}}{(k-1)!(n-k)!}\frac{d^{k-1}}{dz^{k-1}}\left(\sum_{i=0}^\infty {{{2n-k+i-2} \choose i} z^{2i+2n-k-1}}\right)}=\\=\sum_{k=1}^n {\frac{(-1)^{n-k}(n)_{n-k}}{(k-1)!(n-k)!}\sum_{i=0}^\infty {{{2n-k+i-2} \choose i} \frac{(2i+2n-k-1)!}{(2i+2n-2k)!}z^{2i+2n-2k}}}.
\end{gather*}
Substituting $j=n-k,$ we have:
 
\begin{gather*}
\mathcal{P}(\mathcal{I}_{n},z)=\sum_{j=0}^{n-1} {\frac{(-1)^{j}(n)_{j}}{(n-j-1)!j!}\sum_{i=0}^\infty {{{n+j+i-2} \choose i} \frac{(2i+n+j-1)!}{(2i+2j)!}z^{2i+2j}}}=\\=\sum_{j=0}^{n-1} {\frac{(-1)^{j}(n+j-1)!}{(n-j-1)!j!(n-1)!}\sum_{i=0}^\infty {{{n+i+j-2} \choose i} \frac{(n+2i+2j-j-1)!}{(2i+2j)!}z^{2i+2j}}}=\\=
\sum_{j=0}^{n-1} {(-1)^{j}{ n+j-1 \choose j} \sum_{i=0}^\infty {{{n+i+j-2} \choose i} {2i+n+j-1 \choose 2i+2j}z^{2i+2j}}}=\\=
\sum_{k=0}^{\infty} \sum_{i=0}^{\min\{k, n-1\}}(-1)^i { n+i-1 \choose i}{{n+k-2} \choose k-i} {n+2k-i-1 \choose 2k}z^{2k}.
\end{gather*}
Using $(1),$ we get:
 
\begin{align*}
  \mathcal{P}(\mathcal{I}_{n},z)=\sum_{k=0}^{\infty} \sum_{i=0}^{\min\{k, n-3\}}{{n-3} \choose i}{{n-2} \choose i} {{2n+k-i-4} \choose {k-i}}\frac{1}{i+1} z^{2k}=\\
=\sum_{k=0}^{n-3}{{n-3} \choose k}{{n-2} \choose k}\frac{z^{2k}}{k+1}\sum_{i=0}^{\infty} {{(2n-3)+i-1} \choose {i}}{z^{2i}}.
\end{align*}
Note that 
  $$\frac{1}{(1-z^2)^{2n-3}}=\sum_{i=0}^\infty {2n-4 +i \choose i} z^{2i}.$$ This completes the proof.

$(ii)$  Denote by  $$A_n (z)=\sum_{k=1}^n {\frac{(-1)^{n-k}(n)_{n-k}}{(k-1)!(n-k)!}\frac{d^{k-1}}{dz^{k-1}}\left(\frac{z^{2n-k-1}}{(1-z^2)^{2n-k}}\right)},$$    and let $
B_n (z)=\mathcal{P}(\mathcal{C}_{n},z)-A_n (z).$
Reasoning as in the proof of $(i),$ we have 
\begin{align*}
A_n (z)=\sum_{k=0}^{\infty} \sum_{i=0}^{\min\{k, n-1\}}(-1)^i { n+i-1 \choose i}{{n+k-1} \choose k-i} {n+2k-i-1 \choose 2k}z^{2k}=\\=\sum_{k=0}^{\infty} { n+k-1 \choose k} z^{2k} \sum_{i=0}^{\min\{k, n-1\}} (-1)^i { k \choose i } { n+2k-i-1 \choose 2k}=\sum_{k=0}^{\infty} { n+k-1 \choose k}^2 z^{2k} .
\end{align*}
   By using  the Le Jen Shoo's identity, we get:
\begin{align*}
A_n (z)=\sum_{k=0}^{\infty} { n-1 \choose i}^2 \sum_{i=0}^{\min\{k, n-1\}}  {{2n+k-i-2} \choose k-i} z^{2k}=\\= \sum_{k=0}^{ n-1}  { n-1 \choose k}^2 z^{2k} \sum_{k=0}^{\infty} { (n-1)+i-1 \choose i} z^{2i}= \frac{\sum_{k=0}^{ n-1}  { n-1 \choose k}^2 z^{2k}}{(1-z^2)^{2n-1} }.
\end{align*}
We see that 
\begin{gather*}
B_n (z)=\sum_{k=1}^n {\frac{(-1)^{n-k}(n)_{n-k}}{(k-1)!(n-k)!}\frac{d^{k-1}}{dz^{k-1}}\left(\frac{z^{2n-k}}{(1-z^2)^{2n-k}}\right)}=\\=\sum_{k=0}^{\infty} \sum_{i=0}^{\min\{k, n-1\}}(-1)^i { n+i-1 \choose i}{{n+k-1} \choose k-i} {n+2k-i \choose 2k+1}z^{2k+1}=\\=\sum_{k=0}^{\infty} { n{+}k{-}1 \choose k} z^{2k{+}1} \sum_{i=0}^{\min\{k, n{-}1\}} ({-}1)^i { k \choose i } { n{+}2k{-}i \choose n{-}1{-}i}=\sum_{k=0}^{\infty} { n{+}k{-}1 \choose n{-}1}{ n{+}k \choose n{-}1} z^{2k+1} .
\end{gather*}
Using lema 1 $(m=n-2, s=1),$ we have:
\begin{gather*}
B_n (z)=\sum_{k=0}^{\infty}  \sum_{i=0}^{\min\{k, n-2\}} { n-2 \choose i } {n \choose i+1}{ k-i+2n-2 \choose 2n-2}=\\=\sum_{k=0}^{n-2} { n-2 \choose k}{ n \choose k+1} z^{2k+1} \sum_{k=0}^{\infty}  { (n-1)+i-1 \choose i} z^{2i}=\frac{\displaystyle  \sum_{k=0}^{ n-2}  { n-2 \choose k}{ n \choose k+1} z^{2k+1} }{(1-z^2)^{2n-1} }.
\end{gather*}
Thus
$$
\mathcal{P}(\mathcal{C}_{n},z)=A_n(z)+B_n (z)=\dfrac{\displaystyle \sum_{k=0}^{n-1} {{{n-1} \choose {k}}^2 z^{2k}}+\sum_{k=0}^{n-2} {{n-2} \choose {k}} {{n} \choose {k+1}} z^{2k+1}} {(1-z^2)^{2n-1}}.
$$
\end {proof}
Let us  rewrite the expressions in terms of the Narayana polynomials $N_n(z)$ and the Narayana polynomials of type B $W_n(z)$
where 
$$
N_n(z)=\sum_{k=1}^n \frac{1}{k} { n-1 \choose k-1}{ n \choose k-1} z^{k-1} \text{ and } W_n(z)=\sum_{k=0}^{n} {{{n} \choose {k}}^2 z^{k}}.
$$
We get
$$\mathcal{P}(\mathcal{I}_{n},z)=\frac{N_{n-2}(z^2)}{(1-z^2)^{2n-3}} 
 \text{      and  }
\mathcal{P}(\mathcal{C}_{n},z)=\dfrac{W_{n-1}(z^2)+n z N_{{n-1}}(z^2)} {(1-z^2)^{2n-1}}.
$$


\textbf{4.} The transcendence degrees over $\mathbb{C}$ for the algebras $\mathcal{I}_{n}, \mathcal{C}_{n}$ is equal to order of the pole  for $\mathcal{P}(\mathcal{I}_{n},z),\mathcal{P}(\mathcal{C}_{n},z)$ respectively, see \cite{SPB}. Note that for all $n$  $N_{n}(1) \neq 0$  and $W_{n}(1)\neq 0.$  These arguments proves 

\begin{te} The following formulas hold
$$
\begin{array}{ll}
(i) & {\rm tr \deg}_{\mathbb{C}}\,\mathcal{I}_{n}= 2n-3,\\
(ii) & {\rm tr \deg}_{\mathbb{C}}\,\mathcal{C}_{n}= 2n-1.
\end{array}
$$
\end{te}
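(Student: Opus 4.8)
The plan is to apply directly the principle quoted from \cite{SPB}: for these finitely generated graded $\mathbb{C}$-algebras the transcendence degree over $\mathbb{C}$ equals the order of the pole of the Poincar\'e series at $z=1$. Since Theorem 1 supplies both Poincar\'e series as explicit rational functions whose denominators are pure powers of $1-z^2$, the task collapses to reading off the order of vanishing of the denominator at $z=1$ and checking that the numerator does not also vanish there.

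First I would factor $1-z^2=(1-z)(1+z)$. For $\mathcal{I}_n$ the denominator $(1-z^2)^{2n-3}$ then contributes a zero of order exactly $2n-3$ at $z=1$, since the complementary factor $(1+z)^{2n-3}$ is a nonzero constant there. The numerator is $N_{n-2}(z^2)$, and at $z=1$ it takes the value $N_{n-2}(1)$. Because the coefficients $\tfrac1k\binom{n-3}{k-1}\binom{n-2}{k-1}$ are the (strictly positive) Narayana numbers, their sum $N_{n-2}(1)$ is a positive integer, in fact a Catalan number; hence it is nonzero, no factor of $1-z$ cancels, and the pole at $z=1$ has order precisely $2n-3$, which gives $(i)$.

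For $(ii)$ I would repeat the argument with the denominator $(1-z^2)^{2n-1}$, whose candidate pole order at $z=1$ is $2n-1$. The numerator is $W_{n-1}(z^2)+nz\,N_{n-1}(z^2)$, whose value at $z=1$ is $W_{n-1}(1)+n\,N_{n-1}(1)$. Here $W_{n-1}(1)=\sum_k\binom{n-1}{k}^2=\binom{2n-2}{n-1}>0$ and $N_{n-1}(1)>0$ as above, and since $n>0$ both summands are strictly positive. Their sum is therefore nonzero, so again no factor $1-z$ cancels and the pole order is exactly $2n-1$, establishing $(ii)$.

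The one point that needs genuine care, and the only possible obstacle, is the non-vanishing of each numerator at $z=1$: were it to vanish it would share a factor $1-z$ with the denominator and lower the transcendence degree below what the denominator alone suggests. For $\mathcal{I}_n$ this is immediate from positivity of the Narayana numbers. For $\mathcal{C}_n$ the subtle feature is that the numerator is a sum of two terms, so one must rule out cancellation between them; the fact that $W_{n-1}(1)$ and $n\,N_{n-1}(1)$ are both strictly positive does exactly this, which is why the remark $N_n(1)\neq0$, $W_n(1)\neq0$ preceding the statement is the essential input.
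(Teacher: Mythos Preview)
Your argument is essentially identical to the paper's: the paper simply invokes the cited fact from \cite{SPB} that the transcendence degree equals the order of the pole of the Poincar\'e series at $z=1$, and observes that $N_n(1)\neq 0$ and $W_n(1)\neq 0$ so that the numerators in Theorem~1 do not cancel any factor of $1-z$. You have merely spelled out more carefully (and correctly) why the two-term numerator for $\mathcal{C}_n$ cannot vanish at $z=1$, namely that both summands are strictly positive rather than just nonzero.
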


Let  $R= R_0\oplus R_1\oplus \cdots $  be a finitely generated graded complex algebra, $R_0=\mathbb{C}.$ Denote  by
$$
\mathcal{P}(R,z)=\sum_{j=0}^\infty  \dim R_j z^j,
$$
its  Poincar\'e series.
Letting $r$ be  the transcendence degree  of the  quotient field of $R$ over $\mathbb{C},$ the number			
$$
\deg(R):=\lim_{z \to 1} (1-z)^r \mathcal{P}(R,z),
$$ 
is called the  \textit{degree of the algebra} $R$.  The first two terms of  the Laurent series expansion of $\mathcal{P}(R,z)$  at  the point $z=1$ have  the following form 
$$
\mathcal{P}(R,z)=\frac{\deg(R)}{(1-z)^r}+\frac{\psi(R)}{(1-z)^{r-1}}+ \cdots
$$ 
The numbers $\deg(R), \psi(R)$ are important characteristics of the   algebra $R.$ For instance, if
 $R$ is an  algebra of invariants of a finite group  $G$ then  $\deg(R)^{-1}$ is order of the group $G$ and  $2 \dfrac{\psi(R)}{\deg(R)}$ is the number of  pseudo-reflections in $G,$  see \cite{Ben}.

We know explicit forms for the  Poincar\'e series for the algebras of joint invariants and covariants of $n$ linear  forms. Thus we can prove the following statement.
\begin{te}
 The degrees of the algebras of joint invariants and  covariants of $n$ linear  forms are equal to 
 $$
 \begin{array}{ll}
(i) &\deg(\mathcal{P}(\mathcal{I}_{n},z))=\dfrac{N_{n-2}(1)}{2^{2n-3}}=\dfrac{{2n-4\choose n-2}}{(n-1)2^{2n-3}},\\ 

(ii) & 
\deg( \mathcal{P}(\mathcal{C}_{n},z))=\dfrac{{2n-2 \choose n-1}}{2^{2n-2}},
\end{array}
$$
\end{te}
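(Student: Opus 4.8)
The plan is to feed the closed forms from Theorem 1 together with the pole orders from Theorem 2 directly into the defining limit $\deg(R)=\lim_{z\to1}(1-z)^r\,\mathcal{P}(R,z)$. The essential simplification is that every denominator is a power of $1-z^2=(1-z)(1+z)$, so the factor $(1-z)^r$ in the definition exactly cancels the $(1-z)^r$ coming from the denominator, leaving the limit of a rational function that is regular at $z=1$. Thus the whole computation reduces to evaluating the Narayana-type polynomials at $z=1$ and dividing by a power of $2=(1+1)$.

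For part $(i)$, with $r=2n-3$ by Theorem 2$(i)$, writing $(1-z^2)^{2n-3}=(1-z)^{2n-3}(1+z)^{2n-3}$ gives
$$\deg(\mathcal{I}_{n})=\lim_{z\to1}\frac{N_{n-2}(z^2)}{(1+z)^{2n-3}}=\frac{N_{n-2}(1)}{2^{2n-3}}.$$
It then remains to identify $N_{m}(1)$. Since $\frac1k{m-1\choose k-1}{m\choose k-1}=\frac1m{m\choose k}{m\choose k-1}$ is the classical Narayana number, the well-known fact that the Narayana numbers of order $m$ sum to the Catalan number $C_m=\frac1{m+1}{2m\choose m}$ yields $N_m(1)=C_m$. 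Taking $m=n-2$ gives $N_{n-2}(1)=\frac1{n-1}{2n-4\choose n-2}$, which produces the asserted value.

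For part $(ii)$, with $r=2n-1$ by Theorem 2$(ii)$, the same cancellation leaves
$$\deg(\mathcal{C}_{n})=\lim_{z\to1}\frac{W_{n-1}(z^2)+nz\,N_{n-1}(z^2)}{(1+z)^{2n-1}}=\frac{W_{n-1}(1)+n\,N_{n-1}(1)}{2^{2n-1}}.$$
Here I would use the two evaluations $W_{m}(1)=\sum_{k=0}^{m}{m\choose k}^2={2m\choose m}$ (Vandermonde) and, as above, $N_{m}(1)=C_m$. With $m=n-1$ this gives $W_{n-1}(1)={2n-2\choose n-1}$ and $n\,N_{n-1}(1)=n\cdot\frac1n{2n-2\choose n-1}={2n-2\choose n-1}$, so the two summands coincide, the numerator collapses to $2{2n-2\choose n-1}$, and hence $\deg(\mathcal{C}_{n})=\frac{{2n-2\choose n-1}}{2^{2n-2}}$.

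The calculation carries no serious obstacle: once each denominator is factored as $(1-z)(1+z)$ the limit is immediate, and the only inputs are the two standard combinatorial evaluations, namely the Catalan sum of the Narayana numbers and the Vandermonde square-sum. The single point demanding a little care is the clean cancellation in $(ii)$, where the factor $n$ multiplying $N_{n-1}(1)$ precisely cancels the $\frac1n$ hidden in the Catalan number $C_{n-1}$, so that the covariant numerator doubles the central binomial coefficient rather than producing an unwieldy expression.
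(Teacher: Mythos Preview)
Your proposal is correct and follows essentially the same approach as the paper: both arguments plug the closed forms of Theorem~1 and the pole orders of Theorem~2 into the defining limit, cancel $(1-z)^r$ against the factor of $(1-z^2)^r$, and then evaluate $N_m(1)$ via the Catalan identity and $W_m(1)$ via Vandermonde. If anything, your write-up is slightly more explicit than the paper's about the Vandermonde step and the cancellation $n\cdot N_{n-1}(1)=\binom{2n-2}{n-1}$ in part~$(ii)$.
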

\begin{proof}
$(i)$ Using Theorem 1 and Theorem 2, we have:
\begin{gather*}
\deg(\mathcal{I}_{n})=\lim_{z=1} (1-z)^{2n-3}\mathcal{P}(\mathcal{I}_{n},z)=\lim_{z=1} (1-z)^{2n-3}\dfrac{\displaystyle \sum_{k=1}^{n-2} {\frac1k {{n-3} \choose {k-1}}{{n-2} \choose {k-1}}z^{2k-2}}} {(1-z^2)^{2n-3}}= \\=\frac{N_{n-2}(1)}{2^{2n-3}}
\end{gather*}
Note that the number $N_{n-2}(1)$  equal to  the Catalan numbers, see \cite{Mac}. It now follows that
\begin{gather*}
\deg(\mathcal{I}_{n})=\dfrac{{2n-4\choose n-2}}{(n-1)2^{2n-3}}   
\end{gather*}

$(ii)$ We have
\begin{gather*}
\deg( \mathcal{C}_{n})=\lim_{z=1} (1-z)^{2n-1}\mathcal{P}(\mathcal{C}_{n},z)=\\=\lim_{z=1} (1-z)^{2n-1}\frac{\displaystyle \sum_{k=0}^{n-1} {{{n-1} \choose {k}}^2 z^{2k}}+\sum_{k=0}^{n-2} {{n-2} \choose {k}} {{n} \choose {k+1}} z^{2k+1}} {(1-z^2)^{2n-1}}
= \\=\frac{{2n-2 \choose n-1} +nN_{n-1}(1)}{2^{2n-1}}=\frac{{2n-2 \choose n-1}}{2^{2n-2}}
\end{gather*}
\end  {proof}
Note that asymptotically, the Catalan numbers grow as
$$C_n=\frac{1}{n+1}{2n \choose n} \sim \frac{4^n}{n^{3/2} \sqrt{\pi}}.$$ 
 It is easy to calculate asymptotic behaviours of the degrees of the algebras $ \mathcal{I}_{n}$ і $ \mathcal{C}_{n}$:
\begin {co*} Asymptotic behaviours of the degrees of the algebras of joint invariants and  covariants of $n$ linear  forms as $n\rightarrow\infty$ are follows 
$$
\deg(\text{$ \mathcal{I}_{n}$})\sim \frac{1}{2\sqrt{\pi n^3}}  \text{   {\rm and} } 
\deg( \mathcal{C}_{n}) \sim\frac{1}{\sqrt{\pi n}}.$$

\end {co*}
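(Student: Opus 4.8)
The plan is to start from the closed-form degree formulas established in Theorem 3, namely $\deg(\mathcal{I}_n) = \binom{2n-4}{n-2}/\bigl((n-1)\,2^{2n-3}\bigr)$ and $\deg(\mathcal{C}_n) = \binom{2n-2}{n-1}/2^{2n-2}$, and to feed each into the standard asymptotic for the central binomial coefficient. The only analytic input needed is Stirling's formula in the form $\binom{2m}{m} \sim 4^m/\sqrt{\pi m}$ as $m \to \infty$; this is precisely the estimate underlying the Catalan asymptotic $C_n \sim 4^n/(n^{3/2}\sqrt{\pi})$ quoted just above, so no new machinery is required.

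First I would treat the covariant case, which is cleaner. Setting $m = n-1$ gives $\binom{2n-2}{n-1} = \binom{2m}{m} \sim 4^{n-1}/\sqrt{\pi(n-1)}$, while the denominator is exactly $2^{2n-2} = 4^{n-1}$. The powers of four cancel, leaving $\deg(\mathcal{C}_n) \sim 1/\sqrt{\pi(n-1)}$, and since $\sqrt{n-1} \sim \sqrt{n}$ this yields the claimed $1/\sqrt{\pi n}$.

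For the invariant case I would set $m = n-2$, so that $\binom{2n-4}{n-2} = \binom{2m}{m} \sim 4^{n-2}/\sqrt{\pi(n-2)}$. The denominator factors as $2^{2n-3} = 2\cdot 4^{n-2}$, so the $4^{n-2}$ again cancels and one is left with $\deg(\mathcal{I}_n) \sim 1/\bigl(2(n-1)\sqrt{\pi(n-2)}\bigr)$. Replacing $n-1$ and $n-2$ by $n$ in the slowly varying factors gives $(n-1)\sqrt{n-2} \sim n^{3/2}$, whence $\deg(\mathcal{I}_n) \sim 1/(2\sqrt{\pi}\,n^{3/2}) = 1/(2\sqrt{\pi n^3})$, as stated.

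Since both computations are entirely routine, there is no genuine obstacle; the only point requiring a little care is the bookkeeping of the powers of two, i.e. correctly matching $2^{2n-k}$ against $4^{n-\lceil k/2\rceil}$ and tracking whether a stray factor of $2$ survives (it does for $\mathcal{I}_n$ but not for $\mathcal{C}_n$). A secondary subtlety is ensuring the sub-leading shifts $n-1 \sim n$ and $n-2 \sim n$ are applied only inside factors that are asymptotically equivalent to powers of $n$, which is immediate here.
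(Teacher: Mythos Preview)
Your argument is correct and is exactly the approach the paper takes: it simply invokes the Catalan/central-binomial asymptotic $\binom{2m}{m}\sim 4^m/\sqrt{\pi m}$ and plugs in the closed forms from Theorem~3, with the paper omitting the routine bookkeeping you have supplied.
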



\end{document}